\newcommand*{\mailto}[1]{\href{mailto:#1}{\nolinkurl{#1}}}
\newcommand{\arxiv}[1]{\href{http://arxiv.org/abs/#1}{arXiv:#1}}
\newtheorem{theorem}{Theorem}[section]
\newtheorem{lemma}[theorem]{Lemma}
\newcommand{\R}{{\mathbb R}}
\newcommand{\N}{{\mathbb N}}
\newcommand{\C}{{\mathbb C}}
\newcommand{\be}{\begin{equation}}
\newcommand{\ee}{\end{equation}}
\newcommand{\ol}{\overline}
\newcommand{\T}{\mathbb{T}}
\DeclareMathOperator{\supp}{supp}
\newcommand{\eps}{\varepsilon}
\newcommand{\sig}{\sigma}
\numberwithin{equation}{section}
\begin{document}

\title{On the connection between the Hilger and Radon--Nikodym derivatives}

\author[J.\ Eckhardt]{Jonathan Eckhardt}
\address{Faculty of Mathematics\\ University of Vienna\\
Nordbergstrasse 15\\ 1090 Wien\\ Austria}
\email{\mailto{Jonathan.Eckhardt@univie.ac.at}}

\author[G.\ Teschl]{Gerald Teschl}
\address{Faculty of Mathematics\\ University of Vienna\\
Nordbergstrasse 15\\ 1090 Wien\\ Austria\\ and International
Erwin Schr\"odinger
Institute for Mathematical Physics\\ Boltzmanngasse 9\\ 1090 Wien\\ Austria}
\email{\mailto{Gerald.Teschl@univie.ac.at}}
\urladdr{\url{http://www.mat.univie.ac.at/~gerald/}}

\thanks{J. Math. Anal. Appl. {\bf 385}, 1184--1189 (2012)}
\thanks{{\it Research supported by the Austrian Science Fund (FWF) under Grant No.\ Y330}}

\keywords{Time scale calculus, Hilger derivative, Radon--Nikodym derivative}
\subjclass[2010]{Primary 26E70, 28A15; Secondary 34N05, 39A12}

\begin{abstract}
We show that the Hilger derivative on time scales is a special case of the Radon--Nikodym derivative
with respect to the natural measure associated with every time scale.
Moreover, we show that the concept of delta absolute continuity agrees with the one from measure
theory in this context.
\end{abstract}

\maketitle

\section{Introduction}
\label{sec:int}

Time scale calculus was introduced by Hilger in 1988 as a means of unifying differential and difference
calculus. Since then this approach has had an enormous impact and developed into a new field of
mathematics (see e.g.\ \cite{bope}, \cite{bope2} and the references therein).
However, the aim to unify discrete and continuous calculus is of course much older and
goes back at least to the introduction of the Riemann--Stieltjes integral, which unifies sums and integrals,
by Stieltjes in 1894. Of course these ideas have also been used to unify differential and difference
equations and we refer to the seminal work of Atkinson \cite{at} or the book by Mingarelli \cite{ming}.
The inverse operation to the Lebesgue--Stieltjes integral is the Radon--Nikodym
derivative and it is of course natural to ask in what sense this old approach is related to the new
time scale calculus. Interestingly this question has not attained much attention and is still not fully
answered to the best of our knowledge. It is the aim of the present paper to fill this gap by showing that
the Hilger derivative equals the Radon--Nikodym derivative with respect to the measure which is
naturally associated with every time scale. It can be defined in several equivalent ways, for example
via its distribution function, which is just the forward shift function (cf.\ \eqref{borelm}), or
as the image of Lebesgue measure under the backward shift function (cf.\ \eqref{defimgmeas}).
This measure was first introduced by Guseinov in \cite{gus}
and it was shown by Bohner and Guseinov in Chapter~5 of \cite{bope2} that the delta integral
on time scales is a special case of the Lebesgue--Stieltjes integral associated with this measure (see
also \cite{cabviv2}, \cite{deuf}, \cite{mpt} for further results in this direction).

Moreover, Cabada and Vivero \cite{cabviv} introduced the concept of absolutely continuous functions
on time scales and proved a corresponding fundamental theorem of calculus. Again the natural question
arises, in what sense this new concept is related to the usual concept of absolute continuity with
respect to the natural measure associated with the time scale. Of course this is also related to the
concept of weak derivatives introduced by Agarwal, Otero-Espinar, Perera, and Vivero
\cite{agotpevi} (see also the alternative approach by Davidson and Rynne \cite{dary}, \cite{ry} via
completion of continuous functions).

Finally, our result also generalizes the work of Chyan and Fryszkowski \cite{chfr} who showed that
every increasing function on a time scale has a right derivative almost everywhere.

\section{The Hilger derivative as a Radon--Nikodym derivative}
\label{sec:hd}

To set the stage we recall a few definitions and facts from time scale calculus \cite{bope}, \cite{bope2}.
Let $\T$ be a time scale, that is, a nonempty closed subset of $\R$. We define the forward and backward shifts on $\R$ via
\be
\sig(t) = \begin{cases}
\inf \{ s \in\T \,|\, t < s \}, & t < \sup\T,\\
\sup\T, & t \ge \sup\T,
\end{cases}
\quad
\rho(t) = \begin{cases}
\sup \{ s \in\T \,|\, t > s \}, & t > \inf\T,\\
\inf\T, & t \le \inf\T,
\end{cases}
\ee
in the usual way. Note that $\sig$ is nondecreasing right continuous and $\rho$ is
nondecreasing left continuous. The quantity
\be
\mu(t) = \sig(t) -t, \quad t\in\T
\ee
is known as the graininess.
A point $t\in\T$ is called right scattered if $\sig(t)>t$ and left scattered if $\rho(t) < t$. Since a nondecreasing function can have
at most countably many discontinuities there are only countably many right or left scattered points.

Associated with $\T$ is a unique Borel measure which is defined via its distribution function
$\sig$ (this procedure is standard and we refer to, e.g., \cite[Sect.~A.1]{tschroe} for a brief
and concise account). For notational simplicity we denote this measure by the same letter $\sig$ and hence have
\be \label{borelm}
\sig(A) = \begin{cases}
\sig_+(b) - \sig_+(a), & A = (a,b],\\
\sig_+(b) - \sig_-(a), & A = [a,b],\\
\sig_-(b) - \sig_+(a), & A = (a,b),\\
\sig_-(b) - \sig_-(a), & A = [a,b).
\end{cases}
\ee
Here we use the short-hand notation
\be
f_\pm(t) = \lim_{\eps\downarrow 0} f(t\pm\eps)
\ee
for functions $f:\R \to \C$ which are locally of bounded variation (such that the limits always exist). Note that since
$\sig_-(t) = t$ for $t\in\T$ we have
\be
\sig(\{t\}) = \mu(t), \qquad t\in\T.
\ee
The topological support of $\sig$
is given by
\be
\supp(\sig)=\T^\kappa, \qquad
\T^\kappa =\ol{\T \backslash \{\sup\T\}}.
\ee
Note that $\T^\kappa=\T$ if $\T$ does not have a left scattered
maximum and otherwise $\T^\kappa$ is $\T$ without this left scattered maximum.

The Riemann--Stieltjes integral with respect to this measure is known as the delta integral
\be
\int_a^b f(t) \Delta t := \int_{[a,b)} f(t) d\sig(t), \qquad a,b\in\T.
\ee
There is also an alternate way \cite{ry} of defining the integral (and thus the measure) using
\be\label{delint}
\int_{[a,b)} f(t) d\sig(t) = \int_a^b f(\rho(t)) dt, \qquad a,b\in\T.
\ee
Indeed this equality is due to the fact that $\sigma$ is the image measure of the Lebesgue measure $\lambda$
under the function $\rho$, i.e.
\be\label{defimgmeas}
\sigma(A) = \lambda(\rho^{-1}(A))
\ee
for each Borel set $A$ (which is proved readily for intervals). 
Furthermore this shows that some measurable function $f$ is integrable with respect to $\sigma$ if and only
if $f\circ\rho$ is integrable with respect to Lebesgue measure.

A function $f$ on $\T$ is said to be delta (or Hilger) differentiable at some point $t\in\T$ if there is a 
number $f^\Delta(t)$ such that for every $\eps>0$ there is a neighborhood $U \subset\T$ of $t$ such that
\be
|f(\sig(t)) - f(s) - f^\Delta(t) (\sig(t)-s)| \le \eps |\sig(t)-s|, \quad s\in U.
\ee
If $\mu(t)=0$ then $f$ is differentiable at $t$ if and only if it is continuous at $t$ and
\be\label{hdermz}
f^\Delta(t) = \lim_{s\to t} \frac{f(s)-f(t)}{s-t}
\ee
exists (the limit has to be taken for $s\in\T\backslash \{ t\}$).
Similarly, if $\mu(t)>0$ then $f$ is differentiable at $t$ if and only if it is continuous at $t$ and
\be\label{hdermzn}
f^\Delta(t) = \frac{f(\sig(t)) - f(t)}{\mu(t)}
\ee
in this case.

Every function $f:\T \to \C$ can be extended to all of $\R$ via
\be
\bar{f}(t) =  f(\sig(t)), \qquad t\not\in\T.
\ee
Note that if the original function $f$ is continuous at $t\in\T$, then the extension will satisfy
$\bar{f}_-(t)=f(t)$ and $\bar{f}_+(t) = f(\sig(t))$. In particular, $\bar{f}$ will be left continuous
if $f$ is continuous.

Next we briefly review the concept of the derivative of a function on $\R$ with respect to the Borel measure $\sig$.
As already pointed out above, if $\nu:\R\to \C$ is locally of bounded variation we have an associated measure (denoted by the same letter for
notational simplicity) and we can consider the Radon--Nikodym derivative
\be
\frac{d\nu}{d\sig}(t)
\ee
which is defined a.e.\ with respect to $\sig$. We recall (see e.g., \cite[Sect.~1.6]{evga}) that
\be\label{eqnRADNIKderivative}
\frac{d\nu}{d\sig}(t) = \lim_{\eps\downarrow 0} \frac{\nu((t-\eps,t+\eps))}{\sig((t-\eps,t+\eps))} = 
\lim_{\eps\downarrow 0} \frac{\nu_-(t+\eps)-\nu_+(t-\eps)}{\sig_-(t+\eps)-\sig_+(t-\eps)},
\ee
where the limit exists a.e.\ with respect to $\sig$. 
The function $\nu$ is said to be absolutely continuous with respect to $\sig$ on some interval $[a,b)$ if the 
associated measure, restricted to this interval is absolutely continuous with respect to $\sig$, i.e. if
\be
\nu_-(x) - \nu_-(a) = \int_{[a,x)} \frac{d\nu}{d\sig}(t)d\sig(t), \quad x\in[a,b).
\ee
Furthermore $\nu$ is locally absolutely continuous with respect to $\sig$ if it is absolutely continuous on 
each such interval.
Note that in this case the only possible discontinuities of $\nu$ are the right scattered points.

\begin{lemma}\label{lem:hdrnd}
Suppose $f:\T\rightarrow\C$ is delta differentiable in some point $t\in\T^\kappa$ and $\bar{f}$ is locally of bounded variation.
Then the limit in~\eqref{eqnRADNIKderivative} exists and satisfies
\be
\frac{d\bar{f}}{d\sig}(t) = f^\Delta(t).
\ee
\end{lemma}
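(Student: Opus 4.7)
The plan is to split the proof by the value of the graininess $\mu(t)$, exploiting in each case the explicit structure of the measure $\sig$ and of the extension $\bar f$ on the gaps of $\T$. In the right scattered case $\mu(t)>0$, for any $0<\eps<\mu(t)$ the interval $(t,t+\eps)$ lies inside the gap $(t,\sig(t))$, on which $\sig$ and $\bar f$ are constant (equal to $\sig(t)$ and $f(\sig(t))$ respectively); hence $\sig_-(t+\eps)=\sig(t)$ and $\bar f_-(t+\eps)=f(\sig(t))$ are independent of $\eps$. On the left side, monotonicity together with $\sig_-(t)=t$, and left continuity of $\bar f$ at $t$ (which follows from continuity of $f$ at $t$, itself implied by delta differentiability), yield $\sig_+(t-\eps)\to t$ and $\bar f_+(t-\eps)\to f(t)$ as $\eps\downarrow 0$ via the standard BV fact $\lim_{s\uparrow u}g_+(s)=g_-(u)$. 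The quotient in the second form of \eqref{eqnRADNIKderivative} therefore converges to $(f(\sig(t))-f(t))/\mu(t)=f^\Delta(t)$ by \eqref{hdermzn}.

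When $t$ is right dense ($\mu(t)=0$, $\sig(t)=t$), I would introduce $a(\eps):=\sig_+(t-\eps)=\sig(t-\eps)$ and $b(\eps):=\sig_-(t+\eps)$; both lie in $\T$ by closedness, satisfy $a\le t\le b$, tend to $t$ as $\eps\downarrow 0$ by the same arguments as above, and obey $a<b$ because $t\in\supp\sig=\T^\kappa$. Given $\eta>0$, delta differentiability at $t$ furnishes $\delta>0$ such that
\[
|f(s)-f(t)-f^\Delta(t)(s-t)|\le\eta|s-t|\qquad\text{for every }s\in\T\text{ with }|s-t|<\delta.
\]

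For $\eps$ small enough that $|a-t|,|b-t|<\delta$, a short case check (whether $t\pm\eps$ lies in $\T$ and is left/right-scattered or left/right-dense, or lies in a gap) shows that $\bar f_-(t+\eps)$ is always either $f(b)$ or the left limit of $f|_\T$ at $b$, and $\bar f_+(t-\eps)$ is always either $f(a)$ or the right limit of $f|_\T$ at $a$. Applying the displayed estimate at the relevant $\T$-points (passing to a one-sided $\T$-limit where necessary) yields
\[
|\bar f_-(t+\eps)-f(t)-f^\Delta(t)(b-t)|\le\eta(b-t),\qquad |f(t)-\bar f_+(t-\eps)-f^\Delta(t)(t-a)|\le\eta(t-a).
\]
Summing these and dividing by $(b-t)+(t-a)=\sig_-(t+\eps)-\sig_+(t-\eps)>0$ shows the quotient in \eqref{eqnRADNIKderivative} is within $\eta$ of $f^\Delta(t)$, and letting $\eta\downarrow 0$ closes the proof.

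The main obstacle will be the case distinction in the right dense case: the local structure of $\T$ near $t\pm\eps$ is uncontrolled, so the one-sided quantities $\bar f_\mp(t\pm\eps)$ and $\sig_\mp(t\pm\eps)$ take a different explicit form in each scenario. The observation that makes the argument uniform is that, in every scenario, these quantities reduce to values or one-sided $\T$-limits of $f$ at $\T$-points that converge to $t$, so one single application of the Hilger differentiability estimate at $t$ handles them all.
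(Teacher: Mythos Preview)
Your proof is correct and follows essentially the same approach as the paper: split on whether $\mu(t)=0$ or $\mu(t)>0$, and in each case express the quotient in~\eqref{eqnRADNIKderivative} in terms of values of $f$ at $\T$-points converging to $t$, then invoke the Hilger derivative characterization~\eqref{hdermz} or~\eqref{hdermzn}. Your case analysis in the right-dense case is somewhat more explicit than the paper's (which simply asserts $\bar f_-(t+\eps)=f(\sigma_-(t+\eps))$, a formula that can fail when $f$ is discontinuous at $\sigma_-(t+\eps)$), but the overall structure and the decomposition via $b-t$ and $t-a$ mirror the paper's use of $\eta_\pm(\eps)$.
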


\begin{proof}
There are two possible cases:

(i) $\mu(t)=0$. First of all note that~\eqref{hdermz} implies
\[
\lim_{\eps\downarrow 0} \frac{\bar{f}_-(t+\eps)-f(t) - f^\Delta(t) \eta_+(\eps)}{\eta_+(\eps)} = 0, \qquad \eta_+(\eps)= \sig_-(t+\eps) -t.
\]
Indeed this follows because of $\bar{f}_-(t+\eps) = f(\sigma_-(t+\eps))$ and since
$\sigma_-(t+\eps)\rightarrow t$ as $\eps\downarrow0$ (also note that $\sigma_-(t+\eps)\in\T$).
Furthermore if $t$ is left dense we similarly obtain, using $\bar{f}_+(t-\eps) = f(\sigma_+(t-\eps))$
and $\sigma_+(t-\eps)\rightarrow t$ as $\eps\downarrow0$ (also note that $\sigma_+(t-\eps)\in\T$)
\[
\lim_{\eps\downarrow 0} \frac{f(t) - \bar{f}_+(t-\eps) - f^\Delta(t) \eta_-(\eps)}{\eta_-(\eps)} = 0, \qquad \eta_-(\eps)= t - \sig_+(t-\eps).
\]
Now observe that for each $\eps>0$ we have
\begin{align*}
\frac{\bar{f}_-(t+\eps)-\bar{f}_+(t-\eps)}{\sig_-(t+\eps) -\sig_+(t-\eps)} - f^\Delta(t)=& 
\frac{\bar{f}_-(t+\eps)-f(t) - f^\Delta(t) \eta_+(\eps)}{\eta_+(\eps)} \frac{\eta_+(\eps)}{\eta_+(\eps)+\eta_-(\eps)}\, +\\
& \frac{f(t) - \bar{f}_+(t-\eps) - f^\Delta(t) \eta_-(\eps)}{\eta_-(\eps)} \frac{\eta_-(\eps)}{\eta_+(\eps)+\eta_-(\eps)}.
\end{align*}
If $t$ is left scattered, then for small enough $\eps$ the second term vanishes, since then 
$f(t)=\bar{f}_+(t-\eps)$ as well as $\eta_-(\eps)=0$. 
Hence the claim follows since the first term converges to zero. 
Otherwise if $t$ is left dense, both terms converge to zero and the claim again follows (also note that the
fractions stay bounded since $\eta_+(\eps)$ and $\eta_-(\eps)$ are positive).

(ii) $\mu(t)>0$. Since $t$ is right scattered we have for small enough $\eps>0$
\[
\frac{\bar{f}_-(t+\eps)-\bar{f}_+(t-\eps)}{\sig_-(t+\eps) -\sig_+(t-\eps)} = \frac{f(\sig(t))-\bar{f}_+(t-\eps)}{\mu(t) +t -\sig(t-\eps)}
\to \frac{f(\sig(t))-f(t)}{\mu(t)}
\]
as $\eps\downarrow0$ and the claim follows from \eqref{hdermzn}.
\end{proof}

\noindent{\bf Example}: It might be interesting to note that the extension $\bar{f}$ need not be differentiable 
(in the usual sense) at some point $t\in\T$ if $f$ is delta differentiable at $t$ even not if $t$ is dense.
Indeed consider the time scale
\[
\T = \{ 0 \} \cup \left\lbrace \left. \pm\, t_n \,\right|\,n\in\N \right\rbrace, \qquad t_n=\frac{1}{n!},\quad n\in\N
\]
and the function
\[
f(0)=0, \qquad f\left(\pm\, t_n\right)=\frac{\pm 1}{(n+1)!}, \quad n\in\N.
\]
Then $f$ is delta differentiable at zero since
\[
f^\Delta(0) = \lim_{n\to\infty} \frac{f\left(\pm\, t_n\right) -f(0)}{\pm t_n} = 
 \lim_{n\to\infty} \frac{n!}{(n+1)!} = 0.
\]
However the extension $\bar{f}$ is not even right differentiable there, since
\[
\lim_{n\to\infty} \frac{\bar{f}\left(c\, t_n\right) -f(0)}{c\, t_n} = \lim_{n\to\infty} \frac{f\left(t_{n-1}\right) -f(0)}{c\, t_n}
 \lim_{n\to\infty} \frac{n!}{c\, n!} = \frac{1}{c} \ne f^\Delta(0),
\]
for each positive constant $c>1$.

As an immediate consequence of our lemma we obtain our main result:

\begin{theorem}
Suppose $f$ is delta differentiable for all $t\in\T^\kappa$ and $\bar{f}$ is locally of bounded variation. Then
the Radon--Nikodym derivative of $\bar{f}$ and the Hilger derivative of $f$ coincide at every point in $\T^\kappa$.
\end{theorem}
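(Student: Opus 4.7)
The theorem is an almost immediate corollary of Lemma~\ref{lem:hdrnd}, so my plan is simply to lift the pointwise statement of that lemma from a single $t\in\T^\kappa$ to every point of $\T^\kappa$ simultaneously.

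More precisely, fix an arbitrary $t\in\T^\kappa$. By hypothesis $f$ is delta differentiable at $t$, and $\bar f$ is locally of bounded variation on all of $\R$. These are exactly the hypotheses of Lemma~\ref{lem:hdrnd}, which therefore yields two things at this particular $t$: first, the symmetric limit
\[
\lim_{\eps\downarrow 0}\frac{\bar f_-(t+\eps)-\bar f_+(t-\eps)}{\sig_-(t+\eps)-\sig_+(t-\eps)}
\]
exists, so that the Radon--Nikodym derivative $\frac{d\bar f}{d\sig}(t)$ is defined at $t$ in the sense of \eqref{eqnRADNIKderivative}; and second, this limit equals $f^\Delta(t)$.

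Since $t\in\T^\kappa$ was arbitrary, the identity $\frac{d\bar f}{d\sig}(t)=f^\Delta(t)$ holds at every point of $\T^\kappa$, which is the claim.

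There is essentially no obstacle, since all the work was already carried out in the proof of Lemma~\ref{lem:hdrnd}; the only thing worth emphasising is that the Radon--Nikodym derivative $\frac{d\bar f}{d\sig}$ is a priori only defined $\sig$-a.e., whereas the lemma gives a \emph{pointwise} existence statement under the additional hypothesis of delta differentiability at $t$. Thus the content of the theorem is that, beyond agreeing $\sig$-almost everywhere, the two derivatives actually coincide pointwise on the whole of $\T^\kappa$ as soon as $f^\Delta$ exists there.
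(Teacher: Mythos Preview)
Your proposal is correct and matches the paper's own treatment exactly: the paper states the theorem with no separate proof, introducing it only with the phrase ``As an immediate consequence of our lemma we obtain our main result,'' so the entire content is precisely the pointwise application of Lemma~\ref{lem:hdrnd} that you describe.
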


Concerning applications of this result we emphasize that it makes several results from measure theory
directly available to time scale calculus. For example, this result shows that the theory of generalized
differential equations with measure-valued coefficients as developed in the book by Mingarelli \cite{ming} contains
differential equation on time scales as a special case. We will use this in a follow up publication
\cite{etslts} to prove some new results about Sturm--Liouville equations on time scales based on some
recent extensions for Sturm--Liouville equations with measure-valued coefficients \cite{etslm}.

\section{Absolute continuity}
\label{sec:ac}

Absolutely continuous functions on time scales were introduced in~\cite{cabviv}.
Here we will denote them by delta absolutely continuous functions to distinguish them from
absolutely continuous functions in the usual measure theoretic definition.

Let $a$, $b\in\T$ with $a<b$ and $[a,b]_\T=[a,b]\cap\T$ be a subinterval of $\T$. A function $f:\T\rightarrow\C$ is 
said to be delta absolutely continuous on $[a,b]_\T$ if for every $\eps>0$, there exists a $\delta>0$ such that if
$\lbrace [a_k,b_k)\cap\T\rbrace_{k=1}^n$, with $a_k$, $b_k\in [a,b]_\T$ is a finite pairwise disjoint family of
subintervals of $[a,b]_\T$ with $\sum_{k=1}^n (b_k-a_k)<\delta$, then $\sum_{k=1}^n |f(b_k)-f(a_k)|<\eps$.

For functions which are delta absolutely continuous on $[a,b]_\T$, we have a variant of the fundamental theorem 
of calculus.

\begin{theorem}[{\cite[Theorem~4.1]{cabviv}}]\label{thmACFundTheo}
A function $f:\T\rightarrow\C$ is delta absolutely continuous on $[a,b]_\T$ if and only if $f$ is delta 
differentiable almost everywhere with respect to $\sigma$ on $[a,b)_\T$, $f^\Delta\in L^1([a,b)_\T;\sigma)$ and
\be
f(x) = f(a) + \int_{[a,x)_\T} f^\Delta(t) d\sigma(t), \quad x\in [a,b]_\T.
\ee
\end{theorem}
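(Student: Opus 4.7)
The plan is to translate both sides of the theorem into measure-theoretic statements about the extension $\bar f$ and then invoke the classical Radon--Nikodym representation for locally absolutely continuous distribution functions with respect to $\sigma$; Lemma~\ref{lem:hdrnd} will identify the resulting derivative with the Hilger derivative $\sigma$-almost everywhere.

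The implication from the integral representation to delta absolute continuity is essentially bookkeeping. Assuming the representation holds with $f^\Delta\in L^1([a,b)_\T;\sigma)$, for any finite pairwise disjoint family $\{[a_k,b_k)\cap\T\}$ with endpoints in $[a,b]_\T$ one has
\[
\sum_{k=1}^n |f(b_k)-f(a_k)| \le \int_E |f^\Delta|\,d\sigma, \qquad E := \bigcup_{k=1}^n [a_k,b_k)_\T,
\]
and since $\sigma_-(t)=t$ for $t\in\T$, we have $\sigma(E)=\sum_k(b_k-a_k)$; the standard absolute continuity of the Lebesgue--Stieltjes integral of an $L^1$ function then supplies the required $\delta$.

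For the converse, I would first note that the delta AC condition applied to a single shrinking interval forces $f$ to be continuous on $\T$, so $\bar f$ is left continuous on $\R$ with $\bar f_-(t)=f(t)$ for every $t\in\T$. Read on intervals whose endpoints lie in $\T$, the $\eps$-$\delta$ condition for $f$ then translates verbatim into absolute continuity of the complex Borel measure $d\bar f$ with respect to $\sigma$ on $[a,b)$; since $\sigma$ is supported on $\T^\kappa$, an outer regularity argument extends this to arbitrary Borel subsets of $[a,b)$. The classical Radon--Nikodym theorem then produces $g:=d\bar f/d\sigma\in L^1([a,b);\sigma)$ with $f(x)-f(a)=\int_{[a,x)_\T} g\,d\sigma$ for $x\in[a,b]_\T$.

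It remains to identify $g$ with the Hilger derivative $\sigma$-a.e.\ on $[a,b)_\T$. At every right scattered $t\in\T$ this is immediate from the jump formula $g(t)=\mu(t)^{-1}(\bar f_+(t)-\bar f_-(t))=(f(\sigma(t))-f(t))/\mu(t)$ combined with~\eqref{hdermzn}. For the right dense points I would introduce the ordinary antiderivative $F(x):=\int_a^x g(\rho(s))\,ds$ on $[a,b]$, which by~\eqref{delint} coincides with $f(x)-f(a)$ on $\T$ and is classically absolutely continuous, hence Lebesgue differentiable at almost every $x$ with $F'(x)=g(\rho(x))$. Restricting the classical difference quotient of $F$ at a left-and-right dense $t\in\T$, where $\rho(t)=t$, to $s\in\T$ recovers the Hilger limit~\eqref{hdermz} and yields $f^\Delta(t)=g(t)$. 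The main obstacle is this last step: one must argue that the exceptional set arising from classical differentiation is $\sigma$-null on the dense part of $\T^\kappa$, which holds because $\sigma$ agrees with Lebesgue measure there, while the countable set of left scattered right dense points is automatically $\sigma$-null since each carries no point mass.
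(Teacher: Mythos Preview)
The paper does not prove this theorem: it is quoted verbatim from Cabada and Vivero \cite[Theorem~4.1]{cabviv} and used as a black box in the proof of Theorem~3.3. There is therefore no ``paper's own proof'' to compare your proposal against.

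That said, your proposal is a reasonable self-contained argument and is in fact very close in spirit to what the paper does when proving the \emph{subsequent} Theorem~3.3: there the authors also split into the cases isolated/right scattered left dense/left scattered right dense/dense, dispose of the left scattered right dense points as a $\sigma$-null set, and handle the dense points by rewriting the $\sigma$-integral as a Lebesgue integral via~\eqref{delint} and invoking Lebesgue's differentiation theorem (their Lemma~\ref{lemLebp}). Your step introducing $F(x)=\int_a^x g(\rho(s))\,ds$ and reading off $f^\Delta(t)=F'(t)=g(\rho(t))=g(t)$ at dense $t$ is exactly this mechanism.

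One small inconsistency: in your opening plan you say Lemma~\ref{lem:hdrnd} will identify $g$ with $f^\Delta$, but that lemma assumes delta differentiability and deduces the Radon--Nikodym limit, which is the wrong direction here. Your actual argument correctly avoids this and establishes delta differentiability from classical differentiability of $F$ instead; you may want to remove the reference to Lemma~\ref{lem:hdrnd} from the plan. The outer-regularity passage from the $\eps$--$\delta$ condition on $\T$-endpoint intervals to $d\bar f\ll\sigma$ on all Borel sets is the one place that would need a sentence or two more, but it is routine once one observes that both measures are supported on $\T^\kappa$ and that $\sigma([c,d))=d-c$ for $c,d\in\T$.
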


Note that if $f$ is delta absolutely continuous on $[a,b]_\T$, then the extension satisfies
\be
\bar{f}(x) = \bar{f}(a) + \int_{[a,x)} f^\Delta(t)d\sigma(t), \quad x\in[a,b].
\ee

The next lemma of Lebesgue is well known (e.g.\ Corollary~1 in Section~1.7.1 of \cite{evga} or Theorem~A.34 in \cite{tschroe}).

\begin{lemma}\label{lemLebp}
Let $g\in L^1(\R)$, then
\be
\lim_{\eps\downarrow0} \frac{1}{\eps} \int_{x-\eps}^{x+\eps} \left|g(t) - g(x)\right| dt = 0, 
\ee
for almost all $x\in\R$ with respect to Lebesgue measure.
\end{lemma}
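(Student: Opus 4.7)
The plan is to proceed by the classical route, combining the Hardy--Littlewood maximal function with the density of $C_c(\R)$ in $L^1(\R)$. Set
\[
(Mg)(x) = \sup_{\eps>0} \frac{1}{2\eps}\int_{x-\eps}^{x+\eps} |g(t)|\,dt
\]
and, for the quantity of interest,
\[
(\Lambda g)(x) = \limsup_{\eps\downarrow 0} \frac{1}{\eps}\int_{x-\eps}^{x+\eps} |g(t)-g(x)|\,dt.
\]
The goal is then to show that $\Lambda g(x) = 0$ Lebesgue-almost everywhere.

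The first step I would carry out is the weak-type $(1,1)$ maximal inequality: there is a constant $C$ such that $\lambda(\{Mg > \alpha\}) \le C\alpha^{-1}\|g\|_1$ for every $\alpha>0$ and every $g\in L^1(\R)$. The standard proof applies a Vitali-type covering lemma to a compact exhaustion of the superlevel set $\{Mg>\alpha\}$. Granted this ingredient, fix $\alpha,\delta>0$ and, by density, choose $h\in C_c(\R)$ with $\|g-h\|_1<\delta$, so that $f := g-h$ has $L^1$-norm less than $\delta$.

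Since $h$ is uniformly continuous, $(\Lambda h)(x)=0$ for every $x$, and by subadditivity $\Lambda g \le \Lambda f$. The triangle inequality inside the integral gives
\[
\frac{1}{\eps}\int_{x-\eps}^{x+\eps} |f(t)-f(x)|\,dt \le 2(Mf)(x)+2|f(x)|,
\]
so $\Lambda g(x) \le 2(Mf)(x)+2|f(x)|$ pointwise. Combining the weak-$(1,1)$ estimate with Chebyshev's inequality then yields
\[
\lambda\bigl(\{\Lambda g>\alpha\}\bigr) \le \lambda\bigl(\{Mf>\alpha/4\}\bigr)+\lambda\bigl(\{|f|>\alpha/4\}\bigr) \le \frac{4(C+1)}{\alpha}\,\delta.
\]
Since $\delta>0$ was arbitrary, $\lambda(\{\Lambda g>\alpha\})=0$ for each $\alpha>0$, and taking the union over a positive null-sequence of $\alpha$ gives $\Lambda g = 0$ almost everywhere.

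The main obstacle is really the weak-type maximal inequality: it is the only genuinely nontrivial ingredient and the reason one cannot conclude by a direct dominated-convergence or Fubini argument. Once it is in place, everything reduces to the three-line splitting of $g$ into a continuous part (for which the claim is immediate) and a small-norm $L^1$ remainder controlled by the maximal function.
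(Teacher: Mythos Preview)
Your argument is correct and is precisely the standard textbook proof via the Hardy--Littlewood maximal inequality and approximation by $C_c(\R)$. The paper itself does not supply a proof of this lemma at all: it simply quotes it as a well-known result of Lebesgue, with references to Evans--Gariepy \cite{evga} (Corollary~1 in Section~1.7.1) and Teschl \cite{tschroe} (Theorem~A.34). In particular, the proof in Evans--Gariepy is exactly the maximal-function argument you outline, so your proposal is not merely correct but coincides with what the paper implicitly defers to.
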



\begin{theorem}
Some function $f:\T\rightarrow\C$ is delta absolutely continuous on $[a,b]_\T$ if and only if $\bar{f}$ is 
left continuous on $[a,b]$ and absolutely continuous with respect to $\sigma$ on $[a,b)$. In this case 
\be
 f^\Delta(t) = \frac{d\bar{f}}{d\sigma}(t),
\ee
for almost all $t\in[a,b)$ with respect to $\sigma$.
\end{theorem}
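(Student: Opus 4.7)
The plan is to prove the two directions separately, using Theorem~\ref{thmACFundTheo} as the main translation device and exploiting the observation that $\sigma$ coincides with Lebesgue measure on intervals with endpoints in $\T$.

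For the ($\Rightarrow$) direction, Theorem~\ref{thmACFundTheo} together with the remark immediately following it gives $\bar{f}(x) = \bar{f}(a) + \int_{[a,x)} f^\Delta(t)\, d\sigma(t)$ for $x \in [a,b]$. This representation is already the definition of $\bar{f}$ being absolutely continuous with respect to $\sigma$ on $[a,b)$ with density $f^\Delta$, and left continuity of $\bar{f}$ on $[a,b]$ follows by dominated convergence as $x$ approaches any $t \in (a,b]$ from below. The identification $f^\Delta = d\bar{f}/d\sigma$ almost everywhere is then immediate, either from uniqueness of the Radon--Nikodym derivative, or---at every point where the Hilger derivative exists---from Lemma~\ref{lem:hdrnd}.

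For the ($\Leftarrow$) direction, assume $\bar{f}$ is left continuous on $[a,b]$ and absolutely continuous with respect to $\sigma$ on $[a,b)$. Setting $g = d\bar{f}/d\sigma$ and using left continuity to replace $\bar{f}_-$ by $\bar{f}$, the measure-theoretic definition of absolute continuity gives $\bar{f}(x) - \bar{f}(a) = \int_{[a,x)} g\, d\sigma$ for $x \in [a,b)$, and letting $x \uparrow b$ (combining left continuity at $b$ with dominated convergence) extends the formula to $x = b$. Since $\bar{f}|_\T = f$ and $\sigma$ is supported on $\T^\kappa$, this reads $f(x) = f(a) + \int_{[a,x)_\T} g(t)\, d\sigma(t)$ for every $x \in [a,b]_\T$.

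It remains to verify delta absolute continuity of $f$ from its definition. Given a finite pairwise disjoint family $\{[a_k,b_k) \cap \T\}_{k=1}^n$ with $a_k, b_k \in [a,b]_\T$, the formula above yields $\sum_{k=1}^n |f(b_k) - f(a_k)| \le \int_E |g(t)|\, d\sigma(t)$, where $E = \bigcup_k [a_k,b_k)$. The one subtle point, and the main obstacle, is to turn the Lebesgue-length hypothesis $\sum(b_k - a_k) < \delta$ into a $\sigma$-mass bound; this is handled by the observation that $a_k, b_k \in \T$ implies $\sigma([a_k,b_k)) = \sigma_-(b_k) - \sigma_-(a_k) = b_k - a_k$, so $\sigma(E) = \sum(b_k - a_k) < \delta$. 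Since $g \in L^1([a,b);\sigma)$, absolute continuity of its indefinite integral with respect to $\sigma$ then delivers the required $\delta$. A final application of Theorem~\ref{thmACFundTheo} to $f$ produces an integral representation with density $f^\Delta \in L^1([a,b)_\T;\sigma)$, and matching it against the $g$-representation identifies $f^\Delta = g = d\bar{f}/d\sigma$ almost everywhere by uniqueness of the density.
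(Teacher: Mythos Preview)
Your argument is correct, and the forward direction matches the paper. For the converse, however, you take a genuinely different route. The paper establishes delta absolute continuity by first proving that the Hilger derivative exists and equals $d\bar{f}/d\sigma$ at $\sigma$-almost every point of $[a,b)_\T$; this is done via a four-case analysis (isolated, right-scattered left-dense, left-scattered right-dense, dense), the last case relying on the Lebesgue differentiation theorem (Lemma~\ref{lemLebp}) after rewriting the $\sigma$-integral as a Lebesgue integral through~\eqref{delint}. Only then is the ``if'' direction of Theorem~\ref{thmACFundTheo} invoked. You instead bypass this pointwise analysis entirely: the observation $\sigma([a_k,b_k)) = \sigma_-(b_k) - \sigma_-(a_k) = b_k - a_k$ for $a_k,b_k\in\T$ converts the Lebesgue-length hypothesis directly into a $\sigma$-mass bound, so absolute continuity of the indefinite $\sigma$-integral of $g$ immediately yields the $\eps$--$\delta$ definition of delta absolute continuity. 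You then use the ``only if'' direction of Theorem~\ref{thmACFundTheo} to produce $f^\Delta$ and match densities. Your approach is shorter and avoids Lemma~\ref{lemLebp} altogether; the paper's approach, on the other hand, gives more explicit information about which points admit a Hilger derivative and shows the equality $f^\Delta = d\bar{f}/d\sigma$ pointwise rather than only $\sigma$-a.e.\ via uniqueness.
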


\begin{proof}
If $f$ is delta absolutely continuous on $[a,b]_\T$ then $f$ is continuous on $[a,b]_\T$, hence $\bar{f}$ is
left continuous on $[a,b]$. Furthermore $\bar{f}$ is absolutely continuous with respect
to $\sigma$ on the interval $[a,b)$ by Theorem~\ref{thmACFundTheo}.
Conversely assume the extension $\bar{f}$ is left continuous on $[a,b]$ and absolutely continuous with respect to $\sigma$ on $[a,b)$, i.e. $\frac{d\bar{f}}{d\sigma}\in L^1([a,b);\sigma)$ and
\begin{align*}
\bar{f}(x) = \bar{f}(a) + \int_{[a,x)} \frac{d\bar{f}}{d\sigma}(t) d\sigma(t), \quad x\in[a,b].
\end{align*}
Then for each $t\in[a,b)_\T$ there are four cases:

(i) $t$ is an isolated point.
In this case $f$ is Hilger differentiable, with
\begin{align*}
f^\Delta(t) = \frac{\bar{f}(\sigma(t)) - \bar{f}(t)}{\sigma(t)-t}= \frac{\int_{[t,\sigma(t))} \frac{d\bar{f}}{d\sigma}(s)d\sigma(s)}{\sigma(t)-t} 
            = \frac{d\bar{f}}{d\rho}(t).
\end{align*}

(ii) $t$ is right scattered and left dense.
In this case for each small enough $\eps>0$ ($\sigma$ has no mass to the right of $t$) with $t-\eps\in\T$ we have
\begin{align*}
\left| \frac{f(\sigma(t)) - f(t-\eps)}{\sigma(t)-(t-\eps)} - \frac{d\bar{f}}{d\sigma}(t) \right| & = 
   \left|  \frac{\int_{[t-\eps,\sigma(t))} \frac{d\bar{f}}{d\sigma}(s) d\sigma(s)}{\sigma([t-\eps,t])} - \frac{\int_{[t-\eps,t]} \frac{d\bar{f}}{d\sigma}(t) d\sigma(s)}{\sigma([t-\eps,t])} \right| \\
 & \leq \frac{1}{\sigma([t-\eps,t])} \int_{[t-\eps,t]} \left|\frac{d\bar{f}}{d\sigma}(s) - \frac{d\bar{f}}{d\sigma}(t) \right| d\sigma(s).
\end{align*}
Now the right-hand side converges to zero as $\eps\downarrow0$, since the denominator is bounded from below by $\sigma(\lbrace t\rbrace)>0$.

(iii) $t$ is left scattered and right dense. These points are a null set with respect to $\sigma$.

(iv) $t$ is dense.
By redefining the Radon--Nikodym derivative on a null set we may assume that 
\begin{align*}
\frac{d\bar{f}}{d\sigma}(s) = \frac{d\bar{f}}{d\sigma}(\rho(s)), \quad s\not\in\T.
\end{align*}
From~\eqref{delint} we see that this function is integrable over $[a,b)$ with respect to the Lebesgue measure and that
\begin{align*}
\bar{f}(x) = \bar{f}(a) + \int_{[a,x)} \frac{d\bar{f}}{d\sigma}(s) d\sigma(s) = \bar{f}(a)+\int_a^x \frac{d\bar{f}}{d\sigma}(s) ds, \quad x\in[a,b]_\T.
\end{align*}
Now let $\eps>0$ with $t-\eps\in\T$ then
\begin{align*}
\left| \frac{f(\sigma(t)) - f(t-\eps)}{\sigma(t)-(t-\eps)} - \frac{d\bar{f}}{d\sigma}(t) \right| &
  = \left| \frac{1}{\eps} \int_{t-\eps}^t \frac{d\bar{f}}{d\sigma}(s)ds - \frac{1}{\eps}\int_{t-\eps}^t \frac{d\bar{f}}{d\sigma}(t) ds \right| \\
  & \leq \frac{1}{\eps} \int_{t-\eps}^t \left|\frac{d\bar{f}}{d\sigma}(s) - \frac{d\bar{f}}{d\sigma}(t) \right| ds \\
  & \leq \frac{1}{\eps} \int_{t-\eps}^{t+\eps} \left|\frac{d\bar{f}}{d\sigma}(s) - \frac{d\bar{f}}{d\sigma}(t) \right| ds.
\end{align*}
Similar one obtains for each $\eps>0$ with $t+\eps\in\T$ the estimate
\begin{align*}
\left| \frac{f(\sigma(t)) - f(t+\eps)}{\sigma(t)-(t+\eps)} - \frac{d\bar{f}}{d\sigma}(t) \right| \leq 
  \frac{1}{\eps} \int_{t-\eps}^{t+\eps} \left|\frac{d\bar{f}}{d\sigma}(s) - \frac{d\bar{f}}{d\sigma}(t)\right| ds.
\end{align*}
Now Lemma~\ref{lemLebp} shows that the Hilger derivative exists for 
almost all dense $t$ with respect to Lebesgue measure and coincides with the Radon--Nikodym derivative.
But since a Lebesgue null set of dense points is also a null set with respect to $\sig$, this and 
Theorem~\ref{thmACFundTheo} prove that $f$ is delta absolutely continuous on $[a,b]_\T$.
\end{proof}

Of course absolutely continuous functions have derivatives in the weak sense as introduced in \cite{agotpevi}. This
follows from the rule of integration by parts for functions of bounded variation \cite[Theorem~21.67]{hesr}.

\bigskip
\noindent
{\bf Acknowledgments.}
We thank Gusein Guseinov and Martin Bohner for helpful discussions and
hints with respect to the literature. We also thank one of the referees for pointing
out further references.

\end{document}